\documentclass{article}
\usepackage{graphicx} 

 \usepackage{amssymb,amsmath}
 \usepackage{amsthm}
 \usepackage{epic}
 \usepackage{color}
 \usepackage{xargs}
 \usepackage[left=1.0in,top=1in,right=1.0in,bottom=1in]{geometry}

\newtheorem{theorem}{Theorem}
\newtheorem{proposition}[theorem]{Proposition}
\newtheorem{lemma}[theorem]{Lemma}

\newtheorem{claim}{Claim}

\newcommand{\cL}{\mathcal{L}}
 \newcommand{\ovd}[1]{\overrightarrow{#1}}
 \newcommand{\too}{\rightarrowtail}

\begin{document}
\title{Lines on digraphs of low diameter \footnote{
Supported by  Centro de Modelamiento Matemático (CMM), FB210005, BASAL funds for centers of excellence, Fondecyt/Regular 1180994 and Doctoral Fellowship grant 21211955, 
all from ANID-Chile, and {PAPIIT-M\'exico IN108121, CONACyT-M\'exico 282280.}}}
\author{Gabriela Araujo-Pardo$^1$, Martín Matamala$^2$, Juan P. Peña$^2$ and Jos\'e Zamora$^3$\\
\\
  ($1$) Instituto de Matemáticas, Universidad Nacional Aut\'onoma de M\'exico, M\'exico\\
  ($2$) DIM-CMM, CNRS-IRL 2807, Universidad de Chile, Chile\\
  ($3$) Depto. de Matemáticas, Universidad Andres Bello, Chile.}

\maketitle
\begin{abstract} A set of $n$ non-collinear points in the Euclidean plane defines at least $n$ different lines. Chen and Chvátal in 2008 conjectured that the same results is true in metric spaces for an adequate definition of line.

More recently, it was conjectured in 2018 by Aboulker {\it et al.} that any large enough bridgeless graph on $n$ vertices defines a metric space that has at least $n$ lines.

We study the natural extension of Aboulker {\it et al.}'s conjecture into the context of quasi-metric spaces defined by digraphs of low diameter. We prove that it is valid for quasi-metric spaces defined by bipartite digraphs of diameter at most three, oriented graphs of diameter two and, digraphs of diameter three and directed girth four.
\end{abstract}

\section{Introduction}\label{s:intro}

In this work we consider digraphs with diameter two or three in the context of Metric Geometry. We use the quasi-metric defined by the lengths of shortest paths in a digraphs to define the notion of \emph{segment} and \emph{line} in these digraphs as done in \cite{AM20}.
Besides studying properties of these objects, we are mainly interested in counting them in the context of the Chen and Chvátal's conjecture \cite{CC}. This conjecture asks whether every set of non-collinear points in a metric space determines at least as many lines as points the set has. If true, it will generalize to metric spaces a well-known result known in the context of Incidence Geometry as de Bruijn and Erdös's Theorem (see \cite{chvatal_2018} for a more detailed account).

In the last fifteen years several authors have proved that this is indeed the case for some families of graphs and digraphs. On the one hand, in \cite{ABMZ} it was proved that metric spaces defined by \{house,hole\}-free graphs have this property. Previously, this was known for metric spaces defined by chordal graphs \cite{BBCCCCFZ}, distance-hereditary graphs \cite{AK}, graphs such that every induced subgraph is either a chordal graph, has a cut-vertex or a non-trivial module \cite{AMRZ},
bisplit graphs \cite{BKR} and $(q,q-4)$-graphs \cite{SS}. On the other hand, in \cite{AM20} it was proved that the same is true for quasi-metric spaces defined by tournaments and for bipartite tournaments of diameter at most three.

The Chen and Chvátal's conjeture has also been verified for metric spaces with distances $0,1$ or $2$
\cite{ChCh11, Chvatal2} or defined by points in the plane with the Manhattan metric, when no two points lay in a horizontal or in a vertical line  \cite{KP2013}.

In \cite{AMRZ} it was conjectured that there is a finite family of graphs such that each bridgeless graph not in the family defines a metric space with at least as many lines as vertices. Equivalently, metric spaces defined by large enough bridgeless graphs have at least as many lines as vertices. Theorem 5.3 and Theorem 7.5 in \cite{ACHKS} imply that metric spaces with distances in $\{0,1,2,3\}$ and those defined by graphs of bounded diameter, respectively, have this stronger property.
In \cite{MZ2020} this fact was proved for every bridgeless bipartite graph non-isomorphic to the complete bipartite graphs $K_{2,2}$ or $K_{2,3}$.

\subsubsection*{Our results}

Besides the standard notions as given in  \cite{BG2001} and \cite{D2017}, in this work we use the following non-standard definitions and/or assumptions.
\begin{itemize}
 \item All digraphs are assumed to be strongly connected.
 \item A path in a digraph is a directed path.
\item A graph is a digraph $G=(V,A)$ which satisfies, for each pair of distinct vertices $u$ and $v$, that  $(u,v)\in A \iff (v,u)\in A$.
 \item The \emph{directed girth} of a digraph is the length of a shortest directed cycle. Then all digraphs have finite directed girth, a graph has directed girth two and an oriented graph has directed girth at least three.
\item A digraph $G=(V,A)$ admitting a partition\footnote{$X$ and $Y$ are not empties, do not intersect and their union is $V$.} $\{X,Y\}$ of $V$ such that for each $(u,v)\in A$, $|\{u,v\}\cap X|=|\{u,v\}\cap Y|=1$ is called \emph{bipartite}.
\item An arc $(u,v)$ of a digraphs $G$ is a \emph{bridge} if $G-(u,v)$ has no path from $u$ to $v$.
\item A digraph with no bridge is a \emph{bridgeless} digraph.
\end{itemize}

To ease the presentation we say that a digraph is \emph{thin} when it defines a quasi-metric space with less lines than vertices.

From the result in \cite{ACHKS} the class of thin graphs of diameter at most three is finite. Recently, 
a complete characterization of quasimetric spaces with three or four lines was given \cite{GMP2024}, implying that these quasimetric spaces have large diameters. 
Hence, it is reasonable to think that there are only finitely many \emph{thin} quasimetric spaces with low diameter. Whether this is true even for quasimetric spaces defined by digraphs is an open problem. In this work we made modest advances into answering this question. 

It is easy to see that there is no thin digraphs of diameter one and more than two vertices: Digraphs of diameter one are precisely the complete graphs and for each of them,  each edge defines a different line. Hence, none of them is thin, when they have more than two vertices.

Knowing which bridgeless digraphs of diameter two are thin is already a challenge. 
So far, we know ten thin digraphs of diameter two;  All of them are bridgeless \emph{graphs} with the exception of the directed cycle of length three $\overrightarrow{C_3}$.
This fact motivates Theorem \ref{t:oriented2}, where we show that $\overrightarrow{C_3}$ is the only thin oriented graph of diameter two.

Among the nine known thin bridgeless graphs of diameter two, there are only two which are bipartite graphs: the complete bipartite graphs $K_{2,2}$ and $K_{2,3}$. As we mentioned above, it was proved in \cite{MZ2020} that these are the only thin bridgeless bipartite graphs, regardless of their diameters.
We prove in Theorem \ref{t:bip} that this is also true in the class of bridgeless bipartite digraphs of diameter at most three. In fact, we prove that a thin bipartite digraph of diameter at most three either has a bridge, or it is $K_{2,2}$ or $K_{2,3}$.

For thin bridgeless digraphs of diameter three Theorem \ref{t:bip} tells us that none of them is bipartite. So far we know that there are seven thin digraphs of diameter three, three of them are bridgeless graphs and 
four of them are oriented graphs, with bridges and directed girth four.
Our last result is Theorem \ref{t:briddiam3girth4}, where we prove that
a thin digraph of diameter three either has a bridge or has directed girth less than four.

\section{First results and Notation}

Let $G=(V,A)$ be a digraph. The \emph{segment} defined by the pair $(x,y)$ is the set of all vertices $z$ in a shortest path from $x$ to $y$. It is denoted by $[xy]^G$.
Then,
$$[xy]^G=\{z\mid d_G(x,y)=d_G(x,z)+d_G(z,y)\},$$
where for two vertices $x$ and $y$, $d_G(x,y)$ is the length of a shortest path from $x$ to $y$ in $G$. It is worth to stress that in an arbitrary digraph $G$ the function $d_G$ is not symmetric though it is a \emph{quasi-metric}, as it satisfies the triangle inequality and the identity property. Hence segments $[xy]^G$ and $[yx]^G$ may be different. However in a graph the function $d_G$ is symmetric and thus it is a metric.

The \emph{line} defined by the pair $(x,y)$, denoted by $\ovd{xy}^G$, is the set of all vertices $z$ such that there is a shortest path containing $x$, $y$ and $z$.
Then,
$$\ovd{xy}^G=\{z\in V\mid x\in [zy]^G\lor z\in [xy]^G \lor y\in [xz]^G\}.$$
As above, one must keep in mind that for graphs we have that $\ovd{xy}^G=\ovd{yx}^G$, but this is not valid for arbitrary digraphs.

The set of all lines defined by vertices of a digraph $G=(V,A)$ will be denoted by $\cL(G)$. That is,
$$\cL(G)=\{\ovd{xy}^G\mid x,y\in V, x\neq y\}.$$
Then a digraph is thin if and only if $|\cL(G)|<|V|$.

 To ease the presentation we drop the reference to the digraph whenever it is clear from the context. Also, for two vertices $a$ and $b$ of a digraph we denote by $a\to b$ the fact that $d(a,b)=1$ and by $a \too b $ the fact that $a\to b$ and $d(b,a)>1$. In some places we make the abuse of notation $a\too b \to c$ to shorten $a\too b$ and $b\to c$.

 Our first result gives us conditions under which two lines with the same initial point or with the same end point are different.

\begin{lemma}\label{l:samestart}
 Let $G=(V,A)$ be a digraph.
 Let $x,y$ and $z$ in $V$ such that $d(x,y)=d(x,z)$ and $d(z,x)+d(x,z)>d(z,y)$, then
 $z\notin \ovd{xy}$. Similarly, if $d(y,x)=d(z,x)$ and $d(z,x)+d(x,z)>d(y,z)$, then $z\notin \ovd{yx}$.
\end{lemma}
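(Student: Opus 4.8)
The plan is a direct case analysis against the definition of a line. By that definition, $z \in \ovd{xy}$ holds precisely when at least one of the following three identities is satisfied: (i) $x \in [zy]$, that is $d(z,y) = d(z,x) + d(x,y)$; (ii) $z \in [xy]$, that is $d(x,y) = d(x,z) + d(z,y)$; (iii) $y \in [xz]$, that is $d(x,z) = d(x,y) + d(y,z)$. So it suffices to show that, under the hypotheses $d(x,y) = d(x,z)$ and $d(z,x) + d(x,z) > d(z,y)$, none of (i)--(iii) can hold (with the tacit convention that $y \neq z$, since otherwise $z \in \ovd{xy}$ trivially and the statement would be vacuous).

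First I would dispose of (i): substituting $d(x,y) = d(x,z)$ into $d(z,y) = d(z,x) + d(x,y)$ gives $d(z,y) = d(z,x) + d(x,z)$, contradicting the strict inequality in the hypothesis. Then (ii): plugging $d(x,y) = d(x,z)$ into $d(x,y) = d(x,z) + d(z,y)$ forces $d(z,y) = 0$, hence $z = y$, which is excluded. Likewise (iii): the same substitution in $d(x,z) = d(x,y) + d(y,z)$ forces $d(y,z) = 0$, again $z = y$. Hence $z \notin \ovd{xy}$.

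For the second assertion about $\ovd{yx}$ I would either run the identical three-case argument, now using that $z \in \ovd{yx}$ iff $y \in [zx]$ or $z \in [yx]$ or $x \in [yz]$, together with the hypotheses $d(y,x) = d(z,x)$ and $d(z,x) + d(x,z) > d(y,z)$; or, more economically, apply the first part to the reverse digraph $G^{-}$: since $d_{G^{-}}(a,b) = d_G(b,a)$ for all $a,b$, one checks $[ab]^{G^{-}} = [ba]^{G}$, hence $\ovd{xy}^{G^{-}} = \ovd{yx}^{G}$, while the two hypotheses translate verbatim into those of the first part.

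There is no genuine obstacle here; the only points requiring care are not to invoke symmetry of $d$ (we are in a true quasi-metric, so $d(z,x)$ and $d(x,z)$ must be kept distinct), and the harmless degenerate cases --- $z = y$ as noted above, and the observation that $x \neq y$ already entails $x \neq z$ via $d(x,y) = d(x,z)$, so every distance expression appearing above is meaningful.
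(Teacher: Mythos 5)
Your proposal is correct and follows essentially the same route as the paper: both arguments unfold the definition of $\ovd{xy}$ into the three betweenness conditions, kill $z\in[xy]$ and $y\in[xz]$ using $d(x,y)=d(x,z)$ (with $z\neq y$), and kill $x\in[zy]$ using the strict inequality, then repeat (or, in your alternative, transfer via the reverse digraph) for $\ovd{yx}$. Your explicit remark about the tacit assumption $z\neq y$ is a welcome clarification that the paper leaves implicit.
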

\begin{proof}
 Notice that since $d(x,y)=d(x,z)$ we get that $y\notin [xz]$ and $z\notin [xy]$. Hence, $z\in \ovd{xy}$ if and only if $x\in [zy]$. Since
$$d(z,x)+d(x,y)=d(z,x)+d(x,z)>d(z,y),$$
we get that $x\notin [zy]$ and thus $z\notin \ovd{xy}$.

For the second statement we have that $y\notin [zx]$ and $z\notin [yx]$ which implies that $z\in \ovd{yx}$ if and only if $x\in [yz]$. As before, this is not possible as 
$$d(y,x)+d(x,z)=d(z,x)+d(x,z)>d(y,z).$$ 
 \end{proof}

For a set of vertices $U$ not containing $x$ we denote by  $(x,U)$ and $(U,x)$ the following sets of lines:
$$
(x,U)=\{\ovd{xy}\mid y\in U\}\text{ and }
(U,x)=\{\ovd{yx}\mid y\in U\}.
$$
The set $(x,V\setminus\{x\})$ will be denoted by $\cL^x$.

\section{Thin oriented graphs of diameter two}\label{s:digraph}

Let $G$ be an oriented graph of diameter two. Then when $d(x,y)=1$ we have that $d(y,x)=2$. We also have that the line $\ovd{xy}$ defined by two distinct vertices $x$ and $y$ at distance two is exactly the segment $[xy]$ defined by these two vertices. In fact, if $d(x,y)=2$, then for any $z\notin \{x,y\}$, if $x\in [zy]$ we get that $d(z,y)\geq 3$ and if $y\in [xz]$, then $d(x,z)\geq 3$. Hence, $z\in \ovd{xy} \iff z\in [xy]$, when $d(x,y)=2$.

In this section we prove that $\overrightarrow{C_3}$ is the only thin oriented digraph of diameter two.

\begin{theorem}\label{t:oriented2}
Let $G=(V,A)$ be an oriented graph of diameter $2$. If $G$ is thin, then $G$ is isomorphic to $\overrightarrow{C_3}$.
\end{theorem}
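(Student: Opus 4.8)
The plan is to settle the small cases directly and then argue by contradiction for $n:=|V|\ge 4$. There is no strongly connected oriented graph of diameter $2$ on fewer than three vertices, and on exactly three vertices the only strongly connected oriented graph is $\overrightarrow{C_3}$ itself (which is indeed thin, each of its lines being $V$). So assume $n\ge 4$ and, towards a contradiction, that $G$ is thin, i.e. $|\cL(G)|\le n-1$.

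The first step is to pin down, for a fixed vertex $x$, the structure of $\cL^x$. Split $V\setminus\{x\}$ into $N^+(x)$ and $F(x):=\{v: d(x,v)=2\}$. If $y,z$ both lie in $N^+(x)$ then $d(x,y)=d(x,z)=1$ and, since $G$ is oriented of diameter $2$, $d(z,x)=2$, so $d(z,x)+d(x,z)=3>d(z,y)$; if $y,z$ both lie in $F(x)$ then $d(z,x)+d(x,z)=d(z,x)+2\ge 3>d(z,y)$. In either case Lemma~\ref{l:samestart} gives $z\notin\ovd{xy}$. Hence the lines $\{\ovd{xy}:y\in N^+(x)\}$ are pairwise distinct, the lines $\{\ovd{xz}:z\in F(x)\}$ are pairwise distinct, and (as observed just before the theorem) $\ovd{xz}=[xz]=\{x,z\}\cup(N^+(x)\cap N^-(z))$ for $z\in F(x)$. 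Combining these three facts, a coincidence $\ovd{xy}=\ovd{xz}$ with $y\in N^+(x)$ and $z\in F(x)$ forces $N^+(x)\cap N^-(z)=\{y\}$ and $\ovd{xy}=\{x,y,z\}$, a \emph{tight transitive triangle} $x\to y\to z$ with $d(x,z)=2$. Such coincidences therefore form a partial matching between $N^+(x)$ and $F(x)$; writing $m_x$ for its size, we get $|\cL^x|=(n-1)-m_x$.

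The argument now splits according to whether $m_x=0$ for some $x$. If $m_x=0$, then $|\cL^x|=n-1$, so it suffices to exhibit a single line not containing $x$, since every line of $\cL^x$ contains $x$. I would show that ``every line contains $x$'' is impossible for $n\ge 4$: it would force $y\to x\to z$ for every pair $y,z\in V\setminus\{x\}$ at distance $2$, and a short analysis of $N^-(x)$ and $N^+(x)$ using strong connectivity, orientedness and diameter $2$ then collapses $G$ to at most three vertices. Hence $|\cL(G)|\ge n$, a contradiction. Otherwise $m_x\ge 1$ for every vertex, so $\sum_x m_x\ge n$; here I would finish by a global double count of the incidences between vertices and the lines they start, namely $\sum_x|\cL^x|=\sum_{\ell\in\cL(G)}s(\ell)$ where $s(\ell)$ is the number of initial vertices of $\ell$, combined with the strong local restrictions around each tight transitive triangle (which in particular force $N^+(y_x)\subseteq N^+(x)\cup\{z_x\}$ and $N^-(x)\setminus\{z_x\}\subseteq N^-(y_x)$) and, symmetrically, the second part of Lemma~\ref{l:samestart} applied to the families $(V\setminus\{v\},v)$, to contradict $|\cL(G)|\le n-1$.

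The main obstacle is exactly this last case: ruling out all configurations in which $m_x\ge 1$ simultaneously for every vertex $x$. The per-vertex identity only yields $|\cL(G)|\ge\lceil(n-1)/2\rceil$, so the contradiction has to come from a genuinely global argument that exploits the rigidity of having a tight transitive triangle at every vertex at once, showing that such a digraph cannot be thin once $n\ge 4$.
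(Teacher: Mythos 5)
There is a genuine gap, and you name it yourself: the case in which every vertex $x$ has $m_x\ge 1$ is exactly the heart of the theorem, and your proposal offers only a double-counting identity that you correctly observe yields no better than $|\cL(G)|\ge\lceil(n-1)/2\rceil$, together with an unspecified ``genuinely global argument.'' As written, nothing forces a contradiction in that case, so the proof is incomplete precisely where it matters. A secondary omission: each coincidence $\ovd{xy}=\ovd{xz}$ with $x\to y$ and $d(x,z)=2$ is not merely a ``tight transitive triangle'' --- one can show (this is Claim~\ref{cl:diam2rep} of the paper) that necessarily $z\to x$, so the three vertices form a directed $3$-cycle. This extra arc is not cosmetic; it is the engine of the missing argument.

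The paper closes the case you are stuck on \emph{locally}, with no global summation: fix a single $x$ with $k:=m_x\ge 1$ matched pairs $(z_i,y_i)$, where $x\to z_i\to y_i\to x$ and $\ovd{xz_i}=\ovd{xy_i}=\{x,z_i,y_i\}$. One shows that $\{z_1,\dots,z_k\}$ and $\{y_1,\dots,y_k\}$ each induce a tournament with $z_i\to z_j\iff y_j\to y_i$ (Claim~\ref{cl:tournament}), and that the $\binom{k}{2}$ lines $\ovd{z_iz_j}$ are pairwise distinct and avoid $x$, hence are disjoint from $\cL^x$. Since $|\cL^x|=n-1-k$, this already finishes the proof when $k\ge 4$; for $k\in\{2,3\}$ the specific extra lines $\ovd{z_2y_2}$ and $\ovd{y_2y_1}$ (whose analysis uses $y_i\to x$) supply the deficit, and for $k=1$ the lines $\ovd{z_1y}$ and $\ovd{zy_1}$ with $z\in N(x)\setminus\{z_1\}$, $y\in N^2(x)\setminus\{y_1\}$ do. Your case $m_x=0$ is fine in spirit but is resolved much more cheaply than your ``collapse to three vertices'' sketch: for any $a\ne a'$ in $N(x)$ one checks directly that $x\notin\ovd{aa'}$ (Claim~\ref{cl:linedefinaplus}), which gives the one missing line once $|N(x)|\ge 2$; the subcase $|N(x)|=1$ needs (and in the paper receives) a short separate treatment that your matching framework does not address.
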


\begin{proof}
Let $G=(V,A)$ be a thin oriented graph of diameter two. We prove that $G$ is $\overrightarrow{C_3}$.

Since $G$ has diameter 2, for each vertex $x$ we have that
$$V=\{x\}\cup N(x) \cup N^2(x),$$
where $N(x)=\{y\mid d(x,y)=1\}$ and $N^{2}(x)=\{y\mid d(x,y)=2\}$. Thus,
$$\cL^x=(x,N(x))\cup (x,N^2(x)).$$

As $G$ is an oriented graph we have that two distinct vertices $u$ and $v$ satisfy $$d(u,v)+d(v,u)\geq 3.$$

From Lemma \ref{l:samestart} we get that $|(x,N(x))|=|N(x)|$ and $|(x,N^2(x))|=|N^2(x)|$. 

When $N(x)=\{u\}$, then $N(u)=N^2(x)$ and $\overrightarrow{xu}=V$. Moreover, $|(x,N^2(x))|=|V|-2$.

Let us assume that $G$ is not $\overline{C_3}$, then there are $v,v'\in N^2(x)$. Then, $\overrightarrow{xu}\notin (x,N^2(x))$. As $G$ is an oriented graph of diameter two we can assume that $d(v,v')=2$ and then $x\notin \ell:=\overrightarrow{vv'}$ which proves that $\ell\notin (x,N^2(x))\cup \{\overrightarrow{xu} \}$ and then $G$ is not thin.

If $(x,N(x))\cap (x,N^2(x))$ is empty we get that $|\cL^x|=|V|-1$. 
As we can assume that $|N(x)|\geq 2$, we get that $G$ is not thin since we have the following property.

\begin{claim}\label{cl:linedefinaplus}
For each $x\in V$ and each $a,a'\in N(x)$ with $a\neq a'$, $x\notin \ovd{aa'}\cup \ovd{a'a}$.
\end{claim}
\begin{proof} By interchanging the roles of $a$ and $a'$, it is enough to prove that $x\notin \ovd{aa'}$.
If $a\to a'$, then from Lemma \ref{l:samestart} we get that $x\notin \ovd{aa'}$. Otherwise, $d(a,a')=2$ and $\ovd{aa'}=[aa']$, since $G$ has diameter two. As $d(a,a')=2<d(a,x)+d(x,a')$, since $d(x,a')+d(a,x)=1+d(a,x)=3$, we get that $x\notin [aa']=\ovd{aa'}$.
\end{proof}

We now study what happens when $(x,N(x))\cap (x, N^2(x))$ is non-empty.
We can continue under the assumption that for every $x\in V$,
$(x,N(x))\cap (x, N^2(x))$ is non-empty and $|N(x)|\geq 2$.

If $\ovd{xy}=\ovd{xz}$ with $x\to z$, and $z\neq y$, then we know from Lemma \ref{l:samestart} that $\ovd{xy}=\ovd{xz}=\{x,z,y\}$. Moreover we have that  $y\to x$ as we prove below.

\begin{claim}\label{cl:diam2rep} For distinct vertices $x,y,z\in V$, if $x\to z$ and $\ovd{xy}=\ovd{xz}$, then $d(x,y)=2$ and $y\to x$.
 \end{claim}
\begin{proof}
We already know that $\ovd{xy}=\ovd{xz}=\{x,z,y\}$ and that $x\to z$. Since $G$ has diameter two and directed girth at least three, for each $a\in N(x)$, $d(a,x)=2$.
Hence, $d(z,x)=2$. Thus, there is a vertex $w$ such that $d(z,w)=d(w,x)=1$ and $w\notin N(x)$. This implies that $w\in N^2(x)$. Hence, $w\in \ovd{xz}=\{x,z,y\}$ which shows that $w=y$. We conclude that $y\to x$.
\end{proof}

Let
$$V^x_{12}=\{(z,y)\in V^2\mid z\in N(x), y\in N^{2}(x) \textrm{ and } \ovd{xz}=\ovd{xy}\},$$

$$V^x_1=\{z\in V\mid z\in N(x) \text{  and } \forall y\in V, y\neq z, \ovd{xz}\neq \ovd{xy}\}$$
and
$$V^x_2=\{y\in V\mid y\in N^2(x) \text{  and } \forall z\in V, z\neq y, \ovd{xy}\neq \ovd{xz}\}.$$

Then,
$$|V|=2|V^x_{12}|+|V^x_1|+|V^x_2|+1 \text{ and }|\cL^x|=|V^x_{12}|+|V^x_1|+|V^x_2|.$$

From Claim \ref{cl:diam2rep} we know that the set $V^x_{12}$, for any $x\in V$, is given by
$$V^x_{12}=\{(z_1,y_1),\ldots,(z_k,y_k)\},$$
where, for $i=1,\ldots,k$, we have that
$\ovd{xz_i}=\ovd{xy_i}$, $x\to z_i$ and $d(x,y_i)=2$. Moreover, we also know that for each $i=1,\ldots,k$, $\ovd{xz_i}\cap N(x)=\{z_i\}$, $\ovd{xz_i}\cap N^2(x)=\{y_i\}$ and $y_i\to x$. Hence, for $1\leq i,j\leq k$, with $i\neq j$, $d(z_i,y_j)=2$.

Moreover,
$$|\cL^x|=|V|-1-k.$$
To finish the proof we show that the set $\cL(G)\setminus \cL^x$ contains at least $k+1$ lines.

We first consider the case when $k\geq 2$.

\begin{claim}\label{cl:tournament}
$G[\{z_1,\ldots,z_k\}]$ and $G[\{y_1,\ldots,y_k\}]$, the digraphs induced by $\{z_1,\ldots,z_k\}$ and $\{y_1,\ldots,y_k\}$, respectively,  are tournaments, and $z_iz_j\in A \iff y_jy_i\in A$.
\end{claim}
\begin{proof} Let $i,j\in \{1,\ldots,k\}$ with $i\neq j$.
We know that $d(z_i,y_j)=2$. Let $w\in [z_iy_j]$. Then, when $w\in N(x)$ we get that $w=z_j$ and when $w\in N^2(x)$ we get that $w=y_i$. Hence, if $d(z_i,z_j)=2$, then $y_i\to y_j$.
The same argument applied to $z_j$ and $y_i$ shows that if $d(z_j,z_i)=2$, then $y_j\to y_i$.

Assume that $d(z_i,z_j)=2$. Then $y_i\to y_j$. Since $G$ is an oriented graph, we have that $d(y_j,y_i)>1$ which implies that $d(z_j,z_i)\neq 2$. Thus, $z_j \to z_i$ because $G$ has diameter two.
Therefore,  we conclude that $G[\{z_1,\ldots,z_k\}]$ is a tournament.
Similarly, if $d(y_i,y_j)>1$, then $d(z_j,z_i)=1$ which implies that $d(z_i,z_j)=2$. In turn this implies that $y_i\to y_j$. Hence $G[\{y_1,\ldots,y_k\}]$ is also a tournament. Clearly,  $z_i\to z_j \iff y_j\to y_i$.
\end{proof}

\begin{claim}
Let
$$(\cL^x)':=\{\ovd{z_iz_j}\mid z_i \to z_j, i,j\in\{1,\ldots,k\}\}.$$ Then, no line in $(\cL^x)'$ contains the vertex $x$ and $|(\cL^x)'|=\binom{k}{2}$.
\end{claim}
\begin{proof}
From Claim \ref{cl:linedefinaplus} we get immediately that $x\notin \ovd{z_iz_j}$, when $z_i\to z_j$. Hence, no line in $(\cL^x)'$ contains the vertex $x$.

Now we prove that $|(\cL^x)'|=\binom{k}{2}$.
Let us assume that $z_1\to z_2$.  Let $i$ and $j$ be such that $z_i\to z_j$ and $\{1,2\}\neq \{i,j\}$. If $i=1$ or $2=j$ from Lemma \ref{l:samestart} we get that $\ovd{z_1z_2}\neq \ovd{z_iz_j}$. If $i=2$, then $y_2\in \ovd{z_1z_2}$ but $y_2\notin \ovd{z_2z_j}$, by Lemma \ref{l:samestart}. Hence, we get that $i\notin \{1,2\}$.

From Claim \ref{cl:tournament} we can assume that $z_1\to z_i$. Then, $z_2\notin [z_1z_i]$, $z_i\notin [z_1z_2]$, and $z_1\notin [z_iz_2]$, as otherwise, $d(z_i,z_2)\geq 3$.
\end{proof}

From previous claim we have that the sets $\cL^x$ and $(\cL^x)'$ are disjoint and that $|(\cL^x)'|=k(k-1)/2\geq k+1$, when $k\geq 4$. Then,
we can continue the proof under the assumption that $k\in \{2,3\}$.
Let $z_1,z_2$ be such that $z_1\to z_2$.
We prove that $\ovd{z_2y_2}\notin \cL^x\cup (\cL^x)'$. In fact, since $d(z_2,x)=d(z_2,y_2)+d(y_2,x)$, the line $\ovd{z_2y_2}$ contains the vertex $x$ which shows that $\ovd{z_2y_2}\notin (\cL^x)'$.
It also contains $z_1$ and $y_1$ because  $d(z_1,y_2)=d(z_2,y_1)=2$ and from Claim \ref{cl:tournament} we have that  $y_2\to y_1$.
Then, $z_1,y_1,z_2,y_2\in \ovd{z_2y_2}$. Therefore, $\ovd{z_2y_2}$ contains two vertices in $N(x)$ and two vertices in $N^2(x)$ which implies that $\ovd{z_2y_2}\notin \cL^x$.
Hence, when $k=3$, we have that 
$$|\cL^x\cup (\cL^x)'\cup \{ \ovd{z_2y_2}\}|=|V|-1-k+k(k-1)/2+1=|V|.$$

Let $k = 2$. We can assume that $z_1\to z_2$ and $(\cL^x)'=\{\ovd{z_1z_2}\}$. Previous counting shows that we have $|V|-1$ lines in $\cL^x\cup (\cL^x)'\cup \{\ovd{z_2y_2}\}$.
An additional line is $\ovd{y_2y_1}$.
Indeed, as $y_2\to x$ we know that $x$ is not in
$\ovd{y_2y_1}$, by
Lemma \ref{l:samestart}, which implies that $\ovd{y_2y_1}\notin \cL^x \cup \{\ovd{z_2y_2}\}$. By the same lemma we get that
$y_1$ is not in $\ovd{z_1z_2}$, since $z_1\to y_1$. Hence, $\ovd{y_2y_1}\notin (\cL^x)'$ either.

Finally, we consider the case $k=1$. In this situation we have that
$|\cL^x|=|V|-2$.
Let us assume that $G$ is not $\overrightarrow{C_3}$. Then the set $(N(x)\cup N^2(x))\setminus \{z_1,y_1\}$ is not empty. If there is $z\in N(x)$, $z\neq z_1$, then there is $y\in N^2(x)$ with $z\to y$, $y\neq y_1$. Similarly, if there is $y\in N^2(x)$, then there is $z\in N(x)$ with $z\to y$, $z\neq z_1$.

Therefore, $|N(x)|,|N^2(x)|\geq 2$. We consider the lines
$\ovd{z_1y}$ and $\ovd{zy_1}$ for $z\in N(x)$ and $y\in N^2(x)$.
Then, $d(z_1,y)=d(z,y_1)=2$ and $x\notin \ovd{z_1y}\cup \ovd{zy_1}$.
To finish we prove that $\ovd{z_1y}\neq \ovd{zy_1}$. In fact,
if $z\in \ovd{z_1y}=[z_1y]$ we have that $z_1\to z$ which implies that $d(z,z_1)=2$. Hence, $z_1\notin \ovd{zy_1}=[zy_1]$.

\end{proof}

\section{Thin bridgeless digraphs of diameter at most three}

In this section, we study two subclasses of bridgeless digraphs of diameter three. On the one hand,  bipartites digraphs and, on the other hand, oriented graphs of directed girth four.
\subsection{Bipartite digraphs}
As far as we know there are only two thin bridgeless bipartite digraphs: $K_{2,2}$ and $K_{2,3}$. In \cite{MZ2020} it was proved that these are the only thin bridgeless bipartite \emph{graphs}. With the additional constraint of diameter three we can extend this to the class of bipartite digraphs.
Notice that a bipartite digraph of diameter two is, in fact, a complete bipartite graph. Hence, we can focus on bipartite digraphs of diameter three.

\begin{proposition}\label{p:bip9ormore}
 A thin bipartite digraph of diameter three has at most eight vertices.
 \end{proposition}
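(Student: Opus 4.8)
The plan is to derive a contradiction by carefully accounting for the lines through a fixed vertex in a thin bipartite digraph $G=(V,A)$ of diameter three with at least nine vertices. Fix a vertex $x$ and consider the distance layers $N(x)$, $N^2(x)$, $N^3(x)$; since $G$ is bipartite, $N(x)\cup N^3(x)$ sits in one side and $N^2(x)$ in the other, so $|V|=1+|N(x)|+|N^2(x)|+|N^3(x)|$. As in the oriented case, I would first show that the three families $(x,N(x))$, $(x,N^2(x))$, $(x,N^3(x))$ are each almost injective by invoking Lemma~\ref{l:samestart}: for $y,z$ in the same layer one typically has $d(x,y)=d(x,z)$, and whenever $d(z,x)+d(x,z)>d(z,y)$ we get $z\notin\ovd{xy}$. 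The bipartiteness forces $d(z,x)$ and $d(x,z)$ to have the same parity, so $d(z,x)+d(x,z)$ is even and at least $4$ whenever $z\neq x$ is not adjacent-both-ways; this should kill most coincidences within a layer. The residual collisions — pairs $(z,y)$ with $\ovd{xz}=\ovd{xy}$ across different layers — I would organize exactly as the sets $V^x_{12}$-type objects in the theorem above, bounding how small $|\cL^x|$ can be relative to $|V|$.

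The next step is to recover lines \emph{not} through $x$ to make up the deficit. Here the key structural input is that $G$ is bridgeless: every arc $(u,v)$ lies on a directed cycle, which in a bipartite digraph of diameter three forces short cycles (length $4$ through many arcs, since a detour replacing $(u,v)$ has length $d_{G-(u,v)}(u,v)$, which is bounded in terms of the diameter). I would use these $4$-cycles, together with the vertices at distance two from $x$, to produce lines supported entirely inside $N(x)\cup N^2(x)\cup N^3(x)$ — mirroring the roles of the $\ovd{z_iz_j}$, $\ovd{z_2y_2}$, $\ovd{y_2y_1}$ lines in the oriented-graph proof — and to show these are distinct from each other and from everything in $\cL^x$. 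The counting target is that $|\cL(G)|\ge |V|$ whenever $|V|\ge 9$, so I only need on the order of (collision count $+1$) extra lines, and the bridgeless condition should reliably supply them once $|N(x)|$ or $|N^2(x)|$ is large enough; the small-layer cases ($|N(x)|\le 2$, etc.) would be handled separately by naming the few possible digraphs explicitly and checking them against the known list $\{K_{2,2},K_{2,3}\}$.

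The main obstacle I anticipate is the bookkeeping in the cross-layer collision analysis: unlike the diameter-two oriented case, here a vertex $z\in N(x)$ can collide with a vertex in $N^2(x)$ or with one in $N^3(x)$, and a vertex $y\in N^2(x)$ can in principle be collinear (through $x$) with a vertex of $N^3(x)$ as well, so the "collision graph" on $V\setminus\{x\}$ has more edge types than before and one must verify it is essentially a matching (each line $\ovd{xw}$ has at most two representatives) and then rule out long chains of equal lines using Lemma~\ref{l:samestart} applied with the correct distances. A secondary difficulty is extracting short cycles cleanly from bridgelessness when the diameter bound only gives $d_{G-(u,v)}(u,v)\le 3$ rather than $\le 2$, so the detour cycle may have length $4$ or $5$; I expect length $5$ is impossible in a bipartite digraph when $(u,v)$ joins the two sides, leaving length $4$, which is exactly what is needed. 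Once these structural facts are in place, the final inequality $|\cL(G)|\ge|V|$ for $|V|\ge 9$ should follow by adding up the contributions, and the proposition follows.
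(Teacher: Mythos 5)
There is a genuine gap, and in fact two. First, your plan leans on bridgelessness as ``the key structural input'' for producing lines that avoid $x$, but Proposition~\ref{p:bip9ormore} does not assume the digraph is bridgeless (that hypothesis only enters later, in Theorem~\ref{t:bip}, to handle the small cases with at most eight vertices). So the deficit-recovery step of your argument is built on a hypothesis you do not have. Second, even setting that aside, what you have written is a program rather than a proof: the cross-layer collision analysis that you yourself identify as the main obstacle is exactly the part that is not carried out, and the phrases ``should follow'' and ``should reliably supply them'' are doing the real work.

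The reason the paper's proof is short is an observation your layer-by-layer setup passes over: in a bipartite digraph of diameter three, any two distinct vertices $u,v$ in the same part satisfy $d(u,v)=d(v,u)=2$, and any two vertices in different parts are at distance $1$ or $3$ from each other in each direction. Consequently, for $u,v$ in the same part $X$, no third vertex $w\in X$ can lie on $\ovd{uv}$: each of the three betweenness conditions would force a distance sum of at least $4$ against a distance of at most $3$. Hence $\ovd{uv}\cap X=\{u,v\}$, so the $\binom{p}{2}$ lines determined by pairs in the larger part $X$ are pairwise distinct. With $n\geq 9$ one has $p\geq 5$, whence $\binom{p}{2}\geq 2p\geq p+q=n$, and the digraph is not thin --- no lines through a fixed vertex, no collision bookkeeping, and no bridgelessness are needed. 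I would encourage you to look for this kind of ``one part already gives enough lines'' count before launching the heavier $\cL^x$ machinery, which is genuinely needed only when the vertex count is small.
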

\begin{proof}

Let $G=(X\cup Y, A)$ be a bipartite digraph of diameter three
and let $p=|X|$ and $q=|Y|$ with $p\geq q\geq 1$, and $n:=p+q\geq 9$.
We prove that $G$ is not thin.

We have that for each $(i,j) \in A$, either $i\in X$ and $j\in Y$, or $i\in Y$ and $j\in X$.

Since $G$ is bipartite, the bound on the diameter implies that for every two distinct vertices $u$ and $v$ with $|X\cap\{u,v\}|=|Y\cap \{u,v\}|=1$,  we have that
$$ (d(u,v),d(v,u))\in \{(1,1),(1,3),(3,1),(3,3)\},$$ and if $u,v\in X$ or $u,v\in Y$ we have that $(d(u,v),d(v,u))=(2,2)$.

From this, it follows  that the only vertices in $\ovd{uv} \cap X$ are $\{u,v\}$, when $u,v\in X$, and similarly,
$\ovd{uv}\cap Y=\{u,v\}$, when $u,v\in Y$. Therefore, the set of different lines defined by two vertices on the set $X$ (resp. $Y$), denoted by $\cal X$ (resp. $\cal Y$),
has at least $p(p-1)/2$ (resp. $q(q-1)/2$) lines. As the digraph has at least nine vertices we have that $p\geq 5$.
Then, we have that $p(p-1)\geq 4p$ which implies that $|\mathcal{X}|\geq p(p-1)/2\geq 2p\geq p+q=n$.
\end{proof}

%

We complement Proposition \ref{p:bip9ormore} with the next theorem, where we prove that in fact, $C_4$ and $K_{2,3}$ are the only thin bridgeless bipartite digraphs of diameter at most three.

\begin{theorem}\label{t:bip}
The graphs $C_4$ and $K_{23}$ are the only thin bridgeless bipartite digraphs with diameter at most three.
\end{theorem}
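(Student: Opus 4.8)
The plan is to combine Proposition~\ref{p:bip9ormore} with a finite-case analysis for bipartite digraphs of diameter at most three on at most eight vertices. By Proposition~\ref{p:bip9ormore}, a thin bipartite digraph of diameter three has $n \le 8$; a bipartite digraph of diameter two is a complete bipartite graph $K_{p,q}$, and one checks directly (using the known result of \cite{MZ2020} for bipartite graphs, or an elementary count of lines $\ovd{xx'}$ for $x,x'$ in the same side) that among those only $K_{2,2}=C_4$ and $K_{2,3}$ are thin. So the whole statement reduces to: no bridgeless bipartite digraph of diameter exactly three on $3 \le n \le 8$ vertices is thin. First I would record the structural facts already isolated in the proof of Proposition~\ref{p:bip9ormore}: for $u,v$ on opposite sides, $(d(u,v),d(v,u)) \in \{(1,1),(1,3),(3,1),(3,3)\}$, and for $u,v$ on the same side $(d(u,v),d(v,u))=(2,2)$. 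The latter gives $|\ovd{uv}\cap X|=2$ when $u,v\in X$ and likewise for $Y$, so $\mathcal{X}$ (lines from pairs inside $X$) has exactly $\binom{p}{2}$ lines, $\mathcal{Y}$ has exactly $\binom{q}{2}$, and these two families are disjoint from each other (a line in $\mathcal X$ meets $X$ in $2$ vertices, a line in $\mathcal Y$ meets $X$ in $\le 2$ but is determined by a $Y$-pair; one must argue $\mathcal X\cap\mathcal Y=\emptyset$ carefully, e.g.\ via the sizes of the traces on $X$ and $Y$).

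The key reduction is a bound on the sides. We already need $p \ge q$ and $n \le 8$, so $(p,q) \in \{(4,4),(4,3),(3,3),(4,2),(3,2),(2,2),(4,1),(3,1),(2,1)\}$, and bipartiteness plus diameter three forces $q \ge 2$ (a single vertex on one side has in- or out-degree $0$ or makes the diameter too large; more precisely, if $|Y|=1$ then that vertex must dominate and be dominated by all of $X$, forcing $G$ to contain $\overrightarrow{C_3}$-like short cycles — impossible in a bipartite digraph — or the digraph is not strongly connected). Once $q \ge 2$ and $p \le 6$, the count $|\mathcal X| + |\mathcal Y| \ge \binom{p}{2} + \binom{q}{2}$ already gives $n$ lines unless $(p,q)$ is small: $\binom{p}{2}+\binom{q}{2} \ge p+q$ fails only for $(p,q) \in \{(2,2),(3,2),(2,3),(3,3)\}$ (and degenerate $q=1$ cases). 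So the genuinely hard residual cases are $n \in \{4,5,6\}$ with both sides of size $2$ or $3$.

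For each of these finitely many $(p,q)$ I would exhibit enough extra lines — lines defined by an opposite-side pair $(u,v)$ with $u \to v$, or with $d(u,v)=d(v,u)=3$ — to push the total past $n$. Here the $(1,3)$ and $(3,3)$ distance patterns are crucial: if $u\to v$ with $u\in X$, $v\in Y$, then $\ovd{uv}$ contains no vertex of $X\setminus\{u\}$ at the "$v$-side" and one uses Lemma~\ref{l:samestart} (with the pair $u,v$ playing the role of $x$ and a same-side third vertex) to separate these cross lines both from each other and from $\mathcal X \cup \mathcal Y$. The bridgeless hypothesis enters precisely to rule out the would-be thin examples: $C_4$ and $K_{2,3}$ survive because they have few cross-pairs, but any strictly larger bipartite digraph of diameter three that is bridgeless must, by a short argument, have at least two "independent" arcs $u\to v$ with $d(v,u)=3$ on opposite sides, and each contributes a fresh line. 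The main obstacle I anticipate is the bookkeeping for $(p,q)=(3,3)$ with $n=6$: there one has only $\binom{3}{2}+\binom{3}{2}=6$ lines from same-side pairs, so to show non-thinness I must produce at least one cross-line outside $\mathcal X\cup\mathcal Y$ and simultaneously verify that all $6$ same-side lines are genuinely distinct — this forces a case split on which cross-pairs have distance pattern $(1,1)$ (i.e.\ which opposite-side pairs form a digon) and an argument that a diameter-three bridgeless configuration cannot have so many digons that the same-side lines collapse. I would handle this by noting that if every opposite-side pair were a digon the digraph would be $K_{3,3}$ as an undirected graph, which has diameter $2$, a contradiction; hence some pair $(u,v)$ has $d(u,v)=1$, $d(v,u)=3$, and then $\ovd{uv}$ contains a vertex on each side beyond $\{u,v\}$, giving the seventh line.

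\medskip

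\noindent\emph{Remark.} An alternative, cleaner route for the small cases is to observe that a bipartite digraph of diameter three contains its undirected "symmetrisation" restricted to the digon edges, and to bound lines from below using only the metric structure on the digraphs of digons; but the argument above via Lemma~\ref{l:samestart} and the explicit $\binom{p}{2}+\binom{q}{2}$ count is the most self-contained.
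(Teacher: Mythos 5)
Your opening reduction (Proposition~\ref{p:bip9ormore} to get $p\le 4$, the diameter-two observation, and the same-side line count) matches the paper, but there is a genuine gap at the step your whole count rests on: the claim that $\mathcal{X}$ and $\mathcal{Y}$ are disjoint, so that $|\mathcal{X}\cup\mathcal{Y}|\ge\binom{p}{2}+\binom{q}{2}$. The trace argument you sketch does not give this. A line $\ovd{uu'}$ with $u,u'\in X$ meets $X$ in exactly $\{u,u'\}$, and a line $\ovd{vv'}$ with $v,v'\in Y$ meets $Y$ in exactly $\{v,v'\}$, but nothing prevents a single line from meeting both sides in exactly two vertices, so $\mathcal{X}\cap\mathcal{Y}$ can be nonempty; indeed the paper's own proof must explicitly branch on the possibility $\mathcal{X}=\mathcal{Y}$ in the case $p=q=4$, and does not attempt to prove disjointness at all. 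Without it, your count gives only $\max(\binom{p}{2},\binom{q}{2})$ same-side lines, which for $(p,q)=(3,3)$ is $3$ and for $(4,4)$ is $6$ --- far short of $n$ --- so precisely the cases you flag as ``residual'' are the ones where your lower bound collapses. (A secondary slip: $\mathcal{X}$ has \emph{at least} $\binom{p}{2}$ lines, not exactly that many, since $\ovd{uu'}$ and $\ovd{u'u}$ need not coincide; this one is harmless for a lower bound.)

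The second problem is that the finite case analysis, which is the bulk of the actual proof, is left as a plan with the hard points acknowledged but not resolved. For $p=q=3$ you correctly note that some asymmetric arc $(u,v)\in A^*$ exists and that $\ovd{uv}$ lies outside $\mathcal{X}\cup\mathcal{Y}$ (it is in fact universal), but calling it ``the seventh line'' presupposes the six same-side lines are distinct, which is the unproved disjointness above. The paper avoids this entirely: for $p=q=3$ it exhibits five distinct lines through $u$ (one universal plus four from the family $\ovd{v'v},\ovd{vv'},\ovd{v''v},\ovd{v''v'},\ovd{uu'},\ovd{uu''}$, using that $\{v,v'\}\subseteq\ovd{uu'}\cap\ovd{uu''}$) together with $\ovd{u'v}\not\ni u$; and for $p=q=4$ it assumes $\mathcal{X}=\mathcal{Y}$ with $\ovd{xx'}=\ovd{x'x}$ and derives either two distinct lines in $\mathcal{A}$ or a contradiction from $\ovd{uv'}\in\mathcal{X}$ forcing a second arc $(u',v)\in A^*$ into the same vertex $v$. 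You would need arguments of this kind to close the $(3,3)$ and $(4,4)$ cases; as written the proposal does not contain them.
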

\begin{proof}
Let $G=(X\cup Y, E, A)$ be a bridgeless bipartite digraph of diameter at most three. By the same obsevation we made above, we can assume that $G$ has diameter three.
Let $p=|X|$ and $q=|Y|$ with $p\geq q\geq 1$, and $n:=p+q$.
From the proof of Proposition \ref{p:bip9ormore} we can assume that $p\leq 4$.

Since the digraph is bridgeless we can assume that $G$ has no vertex with in-degree or out-degree one and then $4\geq p\geq q\geq 2$. If $G$ is a bipartite graph, then the result follows from Theorem 4 in \cite{MZ2020}. Then we can assume that there is $uv\in A$ with $vu\notin A$. That is, $d(u,v)=1$ and $d(v,u)=3$. Let $A^*\subseteq A$ and $\mathcal{A}$ be given by:
$$A^*=\{(u,v)\in A \mid (v,u)\notin A\}\text{ and }\mathcal{A}=\{\ovd{uv}\mid (u,v)\in A^*\}.$$

Then $\mathcal{A}\neq \emptyset$ and $\mathcal{A}\cap (\mathcal{X}\cup \mathcal{Y})=\emptyset$, where $\mathcal{X}$ and $ \mathcal{Y}$ are the set of lines defined in the proof of Proposition \ref{p:bip9ormore}.
In fact, let $(u,v)\in A^*$. Since $G$ has no triangle we know that
$N(v)\cup \{w\mid (w,u)\in A\}\subseteq \ovd{uv}, $
$u\notin N(v)$ and $(v,u)\notin A$. Since the in-degree of $u$ and the out-degree of $v$  are at least two we get that  $|\ovd{uv}\cap X|$ and $|\ovd{uv}\cap Y|$ are at least 3  which implies that $\ovd{uv}\notin \mathcal{X}\cup \mathcal{Y}$.
Notice that by the same argument we can assume that $q\geq 3$.

Then, $G$ has at least $|\mathcal{X}\cup \mathcal{Y}|+|\mathcal{A}|$ lines.
This shows that when $p=4$ and $q=3$,
the digraph $G$ has at least $6+|\mathcal{A}|\geq 7$, since $\mathcal{A}\neq \emptyset$.

We now focus on the case $p=q=4$. Again, as $\mathcal{A}\neq \emptyset$, if $|\mathcal{X}\cup \mathcal{Y}|\geq 7$ we are done. Hence, we assume that for each two vertices $x$ and $x'$ in $X$, $\ovd{xx'}=\ovd{x'x}$ and that $\mathcal{X}=\mathcal{Y}$

If for some vertex $v$ there are two vertices $u,u'$ with $(u,v),(u',v)\in A^*$, then  $u'\notin \ovd{uv}$ and thus $\ovd{u'v}\neq \ovd{uv}$. Hence, $|\mathcal{A}|\geq 2$ and we are done. Therefore, we can assume that for each vertex $v$, $|\{u\mid (u,v)\in A^*\}|\leq 1$.

Let $(u,v)\in A^*$. W.l.o.g. we can assume that $u\in X$ and $v\in Y$.
As the out-degree of $u$ it at least two, there is $v'\neq v$ such that $d(u,v')=1$. Since $u\to v'$ and $u\to v$ we have that $v\notin [uv']$ and $v'\notin [uv]$. Moreover, as $d(v,u)=3$ and the diameter of $G$ is three, we get that $u\notin [vv']$. Therefore, $v\notin \ovd{uv'}$.

If $\ovd{uv'}\in \mathcal{X}$, then there is $u'\neq u$ such that $d(u,u')=2$ and $\ovd{uu'}=\ovd{u'u}=\ovd{uv'}$. We get a contradiction by proving that $(u',v)\in A^*$.

Since $v\notin \ovd{uv'}$ we have that $d(v,u')=3$ as otherwise $d(v,u')=1$ and $v\in \ovd{uu'}$. Similarly, we have that $d(u',v)=1$, as otherwise, $d(u',v)=3$ which implies that $3=d(u',v)=d(u',u)+d(u,v)$ and then $v\in \ovd{u'u}$. Therefore, $(u',v)\in A^*$.

Finally, we consider the case $p=q=3$. Let $(u,v)\in A^*$, $X=\{u,u',u''\}$ and $Y=\{v,v',v''\}$. Since the out-degree of $v$ and the in-degree of $u$ are at least two we get that $d(v,u')=d(v,u'')=1=d(v',u)=d(v'',u)$.
Then, $\ovd{uv}=X\cup Y$ is an universal line.
Since the out-degree of $u$ and the in-degree of $v$ are at least two we can assume that $d(u',v)=1$ and $d(u,v')=1$.

We prove that $\{v,v'\}\subseteq \ovd{uu'}\cap \ovd{uu''}$. For $d(u,v)=d(u',v)=d(v,u')=d(v,u'')=1$ we get that $v\in \ovd{uu'}\cap \ovd{uu''}$.
If $d(v',u')=1$, then $u\to v'\to u'$ and thus $v'\in \ovd{uu'}$. Otherwise, $d(v',u')=3$ since $G$ has diameter three. Then, $v'\to u \to v \to u'$ is a shortest path of length three which shows that $v'\in \ovd{uu'}$. Similarly, if $d(v',u'')=1$, then $u\to v'\to u''$ and thus $v'\in \ovd{uu''}$. Otherwise, $d(v',u'')=3$.
Then, $v'\to u \to v \to u''$ is a shortest path of length three. Again we conclude that $v'\in \ovd{uu''}$.

The line $\ovd{u'v}$ does not contain $u$. In fact, since $u\to v$, $u'\notin [uv]$,
$u'\to v$, then $u\notin [u'v]$ and since $d(v,u)=3$  and $G$ has diameter three, we get that  $v\notin [u'u]$.

We prove that there are at least five different lines containing $u$. To this end we consider the following seven lines, all containing $u$.
$$\ovd{uv},\ovd{v'v},\ovd{vv'},\ovd{v''v},\ovd{v''v'},\ovd{uu'},\ovd{uu''}.$$

Since among them only $\ovd{uv}$ is universal, it is enough to prove that in this collection  there are at least four different non-universal lines. If $\ovd{vv'}\neq \ovd{v'v}$, then the four lines $\ovd{vv'},\ovd{v'v},\ovd{vv'}$ and $\ovd{v''v'}$ are different.

Otherwise, $\ovd{vv'}=\ovd{v'v}$ and, since $\ovd{uu'}\neq \ovd{uu''}$ and $\{v,v'\}\subseteq \ovd{uu'}\cap \ovd{uu''}$, either $\ovd{uu'}$ or $\ovd{uu''}$ does not belong to  $\{\ovd{v'v}=\ovd{vv'},\ovd{v''v},\ovd{v''v'}\}$. We conclude that
$$|\{\ovd{v'v},\ovd{v''v},\ovd{v''v'},\ovd{uu'},\ovd{uu''}\}|\geq 4.$$

\end{proof}

\subsection{Digraphs of directed girth four.}

From Theorem \ref{t:bip} we know that thin bridgeless digraphs of diameter three (if any) are not bipartite. We now prove that they can not have directed girth four.

\begin{theorem}\label{t:briddiam3girth4}
Let $G=(V,A)$ be a bridgeless digraph of diameter three and directed girth four.
Then, $|\cL(V)|\geq |V|$.
\end{theorem}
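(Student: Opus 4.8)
The plan is to show that any bridgeless digraph $G$ of diameter three with directed girth four has at least $|V|$ lines, by fixing a well-chosen vertex $x$ and carefully counting lines through $x$ versus lines avoiding $x$, in the same spirit as the proof of Theorem~\ref{t:oriented2}. First I would record the basic structure imposed by the hypotheses: since the directed girth is four, for every arc $u\to v$ we have $d(v,u)=3$, so in fact every arc $(u,v)$ satisfies $v\notin[uv]$ trivially, and bridgelessness guarantees that every vertex has in-degree and out-degree at least two (a vertex of out-degree one at $u\to v$ would force that arc to be a bridge, since $d(v,u)=3$ means no alternative $u$-$v$ path of length $\le 3$, and diameter three means none at all). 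For a fixed $x$ we have the partition $V=\{x\}\cup N(x)\cup N^2(x)\cup N^3(x)$ into distance classes, and I would further partition $N(x)$, $N^2(x)$, $N^3(x)$ according to the back-distance $d(\cdot,x)$, which can only be $3$ on $N(x)$, $2$ or $3$ on $N^2(x)$, and $1,2$ or $3$ on $N^3(x)$; girth-four kills the value $d(z,x)=1$ for $z\in N^3(x)$ already since then $d(x,z)\le 1+d(z,x)$ would be wrong — wait, more carefully, $d(z,x)=1$ with $d(x,z)=3$ is impossible only up to the cycle length, so I would instead just note $d(z,x)=1$ and $z\in N^3(x)$ gives a closed walk $x\to\cdots\to z\to x$ of length $4$, which is allowed; so that case survives and must be handled.

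The core counting idea: using Lemma~\ref{l:samestart}, whenever $d(z,x)+d(x,z)>d(z,y)$ and $d(x,y)=d(x,z)$ we get $z\notin\ovd{xy}$, which lets me control collisions among the lines in $\cL^x=(x,N(x))\cup(x,N^2(x))\cup(x,N^3(x))$. The key quantitative step is to bound the number $k$ of "coincidences", i.e. pairs of distinct vertices $y,z$ with $\ovd{xy}=\ovd{xz}$, and to show that each such coincidence can be charged to an extra line not passing through $x$. Concretely, a coincidence $\ovd{xy}=\ovd{xz}$ forces, via Lemma~\ref{l:samestart}, that both $y$ and $z$ lie on a common shortest path from $x$; analyzing which distance classes $y$ and $z$ can belong to (they must be "consecutive" on that path) will give a short list of configuration types, and for each type I would exhibit a line through two vertices of $N(x)$, or through two vertices of $N^3(x)$, or a mixed line, that is not in $\cL^x$ — mirroring the role of the lines $\ovd{z_iz_j}$, $\ovd{z_2y_2}$, $\ovd{y_2y_1}$ in Theorem~\ref{t:oriented2}. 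The bridgeless condition ($\delta^+,\delta^-\ge 2$) is what makes the analogue of Claim~\ref{cl:linedefinaplus} work: for $a,a'\in N(x)$ with $a\to a'$ we get $x\notin\ovd{aa'}$ by Lemma~\ref{l:samestart} since $d(x,a')+d(a,x)=1+3=4>2=d(a,a')$, and when $d(a,a')\neq 1$ we use diameter/girth to pin down $\ovd{aa'}$ and again exclude $x$; having at least two such $a,a'$ (from $\delta^+(x)\ge 2$) already yields lines outside $\cL^x$.

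The expected main obstacle is the case analysis for $N^3(x)$ and its interaction with the back-distances: unlike the oriented-diameter-two setting where $\ovd{xy}=[xy]$ for $d(x,y)=2$, here for $d(x,y)=3$ the line $\ovd{xy}$ need not equal the segment (a vertex $z$ with $x\in[zy]$ or $y\in[xz]$ can exist, e.g. when $d(z,x)+d(x,y)=d(z,y)$ is achievable given back-distances up to $3$), so the clean identities that drove the earlier proof fail and one must track more data. I would manage this by choosing $x$ to minimize (or maximize) some parameter — for instance a vertex $x$ with $N^3(x)$ as small as possible, or a vertex lying on a directed $4$-cycle — so that the problematic configurations become rare or are forced into a shape where the charging argument still produces the needed $k+1$ extra lines. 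If the global argument resists, a fallback is to split into subcases by the diameter of $G[N^3(x)]$ or by whether $(x,N(x))\cap(x,N^2(x)\cup N^3(x))$ is empty, handling the "generic" case by a direct $|\cL^x|=|V|-1$ count plus one extra line from Claim~\ref{cl:linedefinaplus}'s analogue, and the few degenerate small-order cases (where $|V|$ is bounded, as in Proposition~\ref{p:bip9ormore}) by an explicit finite check.
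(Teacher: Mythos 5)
Your plan correctly identifies the paper's high-level strategy (fix a vertex $x$, stratify $V$ into the distance classes $N^i(x)$, count the lines $\ovd{xy}$ through $x$, and charge each coincidence $\ovd{xy}=\ovd{xz}$ to a new line avoiding $x$), but the entire content of the theorem lives in the charging scheme, which you leave as ``I would exhibit a line \dots that is not in $\cL^x$.'' Two structural points are missing and are not recoverable from what you wrote. First, you never identify the actual source of the extra lines: the paper's mechanism is the family $\cL_{R^x}=\{\ovd{ac}\mid d(x,a)=1,\ d(x,c)=d(a,c)=3\}$, whose members avoid $x$ (Claim \ref{l:wtx}), are pairwise distinct (Claim \ref{l:linesplus}), and --- crucially --- number at least a \emph{quadratic} function of the coincidence counts (Claim \ref{cl:Rx}); this quadratic growth is what absorbs the deficit $2|V^x_{123}|+|V^x_{12}|+|V^x_{23}|+|V^x_{13}|$ in the identity $|\cL^x|=|V|-1-2|V^x_{123}|-|V^x_{12}|-|V^x_{23}|-|V^x_{13}|$, and even then the small-coincidence regimes require half a dozen bespoke constructions (Claims \ref{cl:12313} to \ref{cl:23v1more2}). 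A per-coincidence charging with no global distinctness argument does not substitute for this. Second, your proposed fallback of ``an explicit finite check for the degenerate small-order cases'' does not exist here: unlike the bipartite setting of Proposition \ref{p:bip9ormore}, there is no a priori bound on $|V|$ for bridgeless digraphs of diameter three and directed girth four, so no case can be discharged by finiteness.

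There is also a concrete error in the one technical step you do carry out. To exclude $x$ from $\ovd{aa'}$ when $a,a'\in N(x)$ and $a\to a'$, Lemma \ref{l:samestart} is not applicable: in the relevant instantiation it requires $d(a,a')=d(a,x)$, whereas here $d(a,a')=1$ and $d(a,x)=3$ (and your displayed inequality ``$>2=d(a,a')$'' contradicts $a\to a'$). The conclusion $x\notin\ovd{aa'}$ is true, but only by checking the three segment conditions directly, which is exactly what the paper's Claim \ref{l:wtx} does; the girth-four hypothesis enters there through $d(a,x)=3$. As it stands, the proposal is a plausible outline of the right attack, not a proof.
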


\begin{proof}

For a given vertex $x$, we shall consider the following sets of lines:
$\cL^x_i=(x,N^i(x))$,  where $N^i(x)=\{z\in V\setminus \{x\}\mid d(x,z)=i\}$, for $i=1,2,3$. 
Hence, 

From Lemma \ref{l:samestart} it follows that
\begin{claim}\label{cl:uniqusamelevel}
For each $i=1,2,3$ and each $a\in N^i(x)$, $\ovd{xa}\cap N^i(x)=\{a\}$.
\end{claim}

This implies that the number of lines in $\cL_i=(x,N^i(x))$ is exactly $|N^i(x)|$, for each $i=1,2,3$.
As in the proof of Theorem \ref{t:oriented2}, when these sets are pairwise disjoint we have that the set $\mathcal{L}^x$ defined by
$$\mathcal{L}^x:=\cL^x_1\cup\cL^x_2\cup \cL^x_3$$ has $|V|-1$ lines and we need only one additional line to conclude. In most cases, we shall prove that a line does not belong to $\mathcal{L}^x$ by proving that $x$ does not belong to it. In the next claim we present some of these lines.

\begin{claim}\label{l:wtx}
 Let $x,a,u,v\in V$ with $d(x,a)=d(x,u)=1$, $d(a,v)=3$ and $v\neq x$.
 Then, $x\notin \ovd{au}\cup \ovd{av}$. 
\end{claim}
\begin{proof} Notice that since $d(x,a)=1$, then $d(a,x)=3$
and thus $x\notin [ay]$, for each $y\neq x$.
This immediately implies that $x\notin \ovd{av}$ because $x\neq v$ and $\ovd{av}=[av]$, when  $d(a,v)=3$.
 Since $d(x,a)=d(x,u)=1$ we get that $x\in \ovd{au}$ if and only if $x\in [au]$ which as before is not possible because $d(a,x)=3$.
\end{proof}

For $x\in V$, let $R^x$ be the following set of pairs of vertices.
$$R^x=\{(a,c)\mid d(x,a)=1, d(x,c)=3 \text{ and } d(a,c)=3\}.$$
We shall consider the following set $\mathcal{L}_{R^x}$ of lines.
$$\mathcal{L}_{R^x}=\{\ovd{ac}\mid (a,c)\in R^x\}.$$
From previous claim we know that no line in $\mathcal{L}_{R^x}$ contains $x$. Hence, $\mathcal{L}^x\cap \mathcal{L}_{R^x}=\emptyset$.

We have that:

\begin{claim}\label{l:linesplus}
For each $x\in V$, $|\cL_{R^x}|=|R^x|$
\end{claim}
\begin{proof} Let $x,a,a',c,c'\in V$ with $(a,c),(a',c')\in R^x$. We prove that if $a\neq a'$ or $c\neq c'$, then $\ovd{ac}\neq \ovd{a'c'}$.

 Notice that, as $d(a,c)=d(a',c')=3$, the lines $\ovd{ac}$ and $\ovd{a'c'}$ are the segments $[ac]$ and $[a'c']$, respectively.

By Lemma \ref{l:samestart}, we know that if $a=a'$, then $\ovd{ac}\neq \ovd{ac'}$, because in this case $c\neq c'$ by hypothesis and then $d(a,c)+d(c,a)\geq 4> 3\geq d(c,c')$. Then we continue under the assumption $a\neq a'$. For the sake of a contradiction let us assume that $[ac]=[a'c']$. From  $a'\in [ac]$ we get that $d(a,a')+d(a',c)=3$ which implies that $d(a,a')=1$ and $d(a',c)=2$, because $3=d(x,c)\leq d(x,a')+d(a',v)=1+d(a',c)$. Similarly, from $a\in [a'c']$ we get that $d(a',a)+d(a,c')=3$ and then $d(a',a)=1$ which together with $d(a,a')=1$ is a contradiction.
 \end{proof}

For each $x\in V$, we consider the following definitions.
$$V^x_{123}=\{(t_1,t_2,t_3)\in V^3\mid \ovd{xt_1}=\ovd{xt_2}=\ovd{xt_3}, t_i\in N^i(x),  i=1,2,3\},$$
$$V^x_{12}=\{(t_1,t_2)\in V^2\mid \ovd{xt_1}=\ovd{xt_2} \land d(x,t_i)=i, i=1,2 \land \forall t_3\in V, (t_1,t_2,t_3)\notin V_{123}\},$$
$$V^x_{23}=\{(t_2,t_3)\in V^2\mid \ovd{xt_2}=\ovd{xt_3} \land d(x,t_i)=i, i=2,3\land \forall t_1\in V, (t_1,t_2,t_3)\notin V_{123}\},$$
$$V^x_{13}=\{(t_1,t_3)\in V^2\mid \ovd{xt_1}=\ovd{xt_3} \land d(x,t_i)=i, i=1,3 \land \forall t_2\in V, (t_1,t_2,t_3)\notin V_{123}\}$$

and
$$V^x_{i}=\{y\in V\mid d(x,y)=i \land \forall z\neq y,x, \ovd{xy}\neq \ovd{xz}\},$$
for $i=1,2,3$.

To ease the presentation, for each $x\in V$ and each $t\in V^x_{123}\cup V^x_{13}\cup  V^x_{12}\cup V^x_{23}$ we shall denote by $t_i$ the coordinate of $t$ with $d(x,t_i)=i$. 
For each $x\in V$, the definitions of the sets $V^x_{123}, V^x_{13},V^x_{12}$ and $V^x_{23}$ imply that a given vertex can be a coordinate of at most one of their elements.  That is, for each $t,r\in  V^x_{123}\cup V^x_{13}\cup  V^x_{12}\cup V^x_{23}$ we have that if there is an index $i$ such that $t_i=r_i$, then $r=t$. Hence, for each $x\in V$,
$$|V|=3|V^x_{123}|+2(|V^x_{12}|+|V^x_{23}|+V^x_{13}|)+|V^x_1|+|V^x_2|+|V^x_3|{+1}.$$

Moreover,
\begin{equation}\label{e:main}
\begin{split}
 |\mathcal{L}^x|& = |V^x_{123}|+|V^x_{12}|+|V^x_{23}|+|V^x_{13}|+|V^x_1|+|V^x_2| \\
& =  |V|-1-2|V^x_{123}|-|V^x_{12}|-|V^x_{23}|-|V^x_{13}|.
\end{split}
\end{equation}

To achieve our goal we have to find two additional lines for each line associated to $V^x_{123}$ and one additional line for those associated to $V^x_{ij}$, for $i\neq j$. Beside that, we need to find an extra line to compensate the $-1$.

\begin{claim}\label{cl:relations}
Let $i,j,k$, $i<j$, such that $\{i,j,k\}=\{1,2,3\}$. Then, for each $x\in V$,  for each $t\in V^x_{ij}$,
we have that  $\emptyset \neq \ovd{xt_i}\cap N^k(x)\subseteq V^x_k$.
\end{claim}
\begin{proof} When $j=3$, for $t\in V^x_{ij}$ a shortest path from $x$ to $t_j$ contains a vertex in $N^k(x)$. When $(i,j)=(1,2)$, a shortest path from $t_j$ to $x$ contains a vertex in $N^3(x)$.

If a vertex $z\in \ovd{xt_i}\cap N^k(x)$ is such that $z\notin V^x_k$, then there is a vertex  $r$ in $V^x_{123}\cup V^x_{12}\cup V^x_{23}\cup V^x_{13}$ with $r_k=z$. But then either $t_i=r_i$ or $t_j=r_j$ which implies that $\ovd{xt_i}=\ovd{xr_i}=\ovd{xr_k}$ or $\ovd{xt_j}=\ovd{xr_j}=\ovd{xr_k}$ contradicting the fact that $t\in V^x_{ij}$ (and not to $V^x_{123}$).
\end{proof}

\begin{claim}\label{cl:Rx} For each $x\in V$ we have that
\begin{equation}\begin{split}
 |R^x|\geq & (|V^x_{123}|+|V^x_{13}|)(|V^x_{123}|+|V^x_{13}|-1)+\\
 &(|V^x_{123}|+|V^x_{13}|)(|V^x_{12}|+|V^x_{23}|+|V^x_1|+|V^x_3|)
\end{split}
\end{equation}
\end{claim}
\begin{proof}
 Let $t\in V^x_{123}\cup V^x_{13}\cup  V^x_{12}\cup V^x_1$ and $r\in V^x_{123}\cup V^x_{13}\cup V^x_{23}\cup V^x_3$, $r\neq t$. We show that $d(t_1,r_3)=3$. In fact, if $d(t_1,r_3)\leq 2$, then $d(t_1,r_3)=2$. Hence, $r_3\in \ovd{xt_1}$ and $t_1\in \ovd{xr_3}$. If $t\in V^x_{123}\cup V^x_{13}$, this implies that $t_3=r_3$ and thus $t=r$. When $t\in V^x_{12}$, the fact that $t_1\in \ovd{xr_3}$ implies that $t_2=r_2$ and again we get $t=r$.
\end{proof}

\begin{claim}\label{cl:12313} If there is $x\in V$ such that $V^x_{123}\cup V^x_{13}$ is non-empty, then $|\cL(V)|\geq |V|$.
\end{claim}
\begin{proof}
 From previous claim we have that 
 \begin{equation*}
 \begin{split}
  |\cL(V)|=& |\cL(V)\setminus (\cL^x \cup \cL_{R^x})|+|\cL^x|+|R^x| \\
  \geq &|\cL(V)\setminus (\cL^x \cup \cL_{R^x})|+|V|-1-2|V^x_{123}|-|V^x_{12}|-|V^x_{13}|-|V^x_{23}|+\\
  & (|V^x_{123}|+|V^x_{13}|)(|V^x_{123}|+|V^x_{13}|-1)+\\
  &(|V^x_{123}|+|V^x_{13}|)(|V^x_{12}|+|V^x_{23}|+|V^x_1|+|V^x_3|)
 \end{split}
\end{equation*}

Previous inequality can be written as follows.

\begin{equation}\label{e:maintotal}
 \begin{split}
  |\cL(V)|\geq &|V|+|\cL(V)\setminus (\cL^x \cup \cL_{R^x})|-1+(|V^x_{123}|+|V^x_{13}|-2)(|V^x_{123}|+|V^x_{13}|-1)+\\
  &|V^x_{13}|-2+(|V^x_{123}|+|V^x_{13}|-1)(|V^x_{12}|+|V^x_{23}|+|V^x_1|+|V^x_3|)\\
  &|V^x_1|+|V^x_3|
 \end{split}
\end{equation}

It worth to notice that for each $t\in V^x_{123}\cup V^x_{12}$, $t_2$ is the only out-neighbor of $t_1$ in $N^2(x)$. As $G$ is bridgeless, $t_1$ must have at least another out-neighbor in $N(x)$. This implies that $V^x_1\cup V^x_{13}$ is non-empty when $|V^x_{123}|+|V^x_{12}|\leq 3$, because the directed girth of $G$ is four.

We also have that for each $i\in \{1,2,3\}$, $v$ belongs to $N^i(x)$ if and only if there is $t\in V^x_{123}\cup V^x_{kj}\cup V^x_i$, when $i\in \{j,k\}$ such that $v=t_i$.

When $|V^x_{123}|+|V^x_{13}|\geq 4$ we get that $|\cL(V)|\geq |V|$. 
When $|V^x_{123}|+|V^x_{13}|=3$, then we also get $|\cL(V)|\geq |V|$. In fact, if $|V^x_{13}|=0$, then the above comment implies that $V^x_{12}\cup V^x_1$ is non-empty.

When $|V^x_{123}|=2$ and $V^x_{13}$ is empty, then we have that $|V^x_1|\geq 1$ or $|V^x_{12}|\geq 2$, either case we have problems only when  $V^x_3\cup V^x_{23}$ is empty. But when $V^x_{12}$ is not empty, then $V^x_3$ is not empty either, since the path from $t_1$ to $x$, for $t\in V^x_{12}$, contains a vertex in $V^x_3$. Therefore, we can assume that $V^x_{12}$ is empty. Hence $V^x_1$ is non-empty and then $V^x_3\cup V^x_{23}$ is non-empty, since a path from $z\in V^x_1$ to $x$ of length three contains a vertex in $N^3(x)$ which can not be part of a tuple in $V^x_{123}$. 

Let $|V^x_{123}|+|V^x_{13}|=2$ and $V^x_{13}$ be non-empty.
For $r\in V^x_{13}$ we have that $\overrightarrow{xr_1}\cap N^2(x)$ contains at least two vertices $v$ and $v'$, since $\overrightarrow{xr_1}\notin \cL^x_2$. In this case, the lines $\overrightarrow{r_1v}$ and $\overrightarrow{r_1v'}$ are different as $v\notin \overrightarrow{r_1v'}$, by Lemma \ref{l:samestart}. These lines contain $x$ and $r_3$, since $x\to r_1\to v,v'\to r_3$, which implies that they do not belong to $\cL_{R^x}$ neither to $\cL^x_1\cup \cL^x_3$, since $\overrightarrow{xr_1}=\overrightarrow{xr_3}$ contains both $v$ and $v'$. Moreover, none of them belong to $\cL^x_2$ because, if $\overrightarrow{r_1v}=\overrightarrow{xw}$, then $w=v$ and $\overrightarrow{xv}\cap N(x)=\{r_1\}$, because any in-neighbor of $v$ in $N(x)$ belongs to $\overrightarrow{xr_3}$.  Hence, we get the contradiciton $\overrightarrow{xv}=\overrightarrow{xr_1}$.

    It remains to consider the case when $V^x_{123}\cup V^x_{13}$ is a singleton $\{t\}$.
If $t\in V^x_{123}$, then $t_1$ has an out-neighbor $v$ in $N(x)$ and $V^x_{13}$ is empty.
Let $vv'v''x$ a path of length three from $v$ to $x$. Then, 
$t_1vv'v''$ is a path of length three from $t_1$ to $v''$, which implies that the line $\overrightarrow{vv'}$ contains $x,t_1,v,v',v''$. In turns, it shows that this line does not belong to $\cL_{R^x}$. Moreover, $t_1\notin \overrightarrow{xy},$ for each $y\in \{v,v',v''\}$ and then $\overrightarrow{vv'}\notin \cL^x\cup \cL_{R^x}$.

If $v$ has a out-neighbor $w\in N(x)$, then $w\neq t_1$ and, as before, we can consider a path $ww'w''x$ of length three, from $w$ to $x$. Hence, the line 
$\overrightarrow{ww'}$ contains $x$ and $v$, but $w\notin \overrightarrow{vv'}$ which implies that $\overrightarrow{ww'}\notin \cL^x\cup \cL_{R^x}$
and $\overrightarrow{vv'}\neq \overrightarrow{ww'}$. If $v\in V^x_1$, then we get the conclusion. Otherwise, $v=r_1$, for some $r\in V^x_{12}$, and $v''\in V^x_3$, which also implies the conclusion. 

Therefore, we can assume that no out-neighbor of $t_1$ in $N(x)$ has an out-neighbor in $N(x)$. This implies that each out-neighbor of $t_1$ belongs to $V^x_1$. Hence, we can also assume that $v$ is the only out-neighbor of $t_1$ in $N(x)$, that $V^x_1=\{v\}$ and that $V^x_3$ is empty.  Then, $v''=r_3$ for some $r\in V^x_{23}$ (and thus $v'=r_2$). 
The vertex $v$ must have at least two out-neighbors which implies that $|V^x_{23}|\geq 2$, since $N(x)=\{t_1,v\}$ and $t_1\to v$. Therefore, the line $\overrightarrow{t_1v}$ contains all vertices of $G$ with the exception of $x,t_2$ and $t_3$ and, since $|V^x_{23}|\geq 2$, it contains at least two vertices in $N^3(x)\setminus \{t_3\}$. Whence, it does not belong to $\cL_{R^x}$. This shows that $\overrightarrow{t_1v_1}\notin \cL^x\cup \cL_{R^x}$ and that $\overrightarrow{t_1v_1}\neq \overrightarrow{v_1v_2}$ and we get the conclusion.

Now, let us assume that $t\in V^x_{13}$. Then, $V^x_{123}$ is empty. 
Since $|N(x)|\geq 2$, we have that $V^x_{1}\cup V^x_{12}$ is non-empty, which in turns implies that $V^x_1 \cup V^x_3$ is non-empty as well. To conclude the proof of the claim it is enough to find a line not in $\cL^x\cup \cL_{R^x}$.

Let $z\in N^2(x)$ such that $xt_1zt_3$ is a path of length three from $x$ to $t_3$. 
The line $\overrightarrow{t_1z}$ contains the vertices $x,t_1,z,t_3$. Then, it does not belong to $\cL_{R^x}$. Let us assume that there is $w$ such that $\overrightarrow{xw}=\overrightarrow{t_1z}$. If $w\in N(x)$, then $w=t_1$, and if $w\in N^3(x)$, then $w=t_3$. In either case we get a contradiction with $\overrightarrow{xz}\neq\overrightarrow{xt_1}$. Hence, $w\in N^2(x)$ and then $w=z$.
Let $z'$ be an out-neighbor of $z$ different from $t_3$. If $z'\in N^3(x)$, then 
$z'\in \overrightarrow{xt_1}=\overrightarrow{xt_3}$, which is a contradiction.
If $z'\in N^2(x)$, then $z'\in \overrightarrow{t_1z}=\overrightarrow{xz}$ which is also a contradiction. Hence, $z'\in N(x)$. In this case, $z'\notin \overrightarrow{xz}$ which implies that $z'$ is an out-neighbor of $t_1$. 
We can now proceed as before showing that the line $\overrightarrow{z'v'}$, where $z'v'v''$ is a path of length three from $z'$ to $x$, does not belong to 
$\cL\cup \cL_{R^x}$.
 \end{proof}
 
\begin{claim}\label{cl:1223} If there is $x\in V$ such that $V^x_{12},V^x_{23}\neq \emptyset$, then
$|\cL(V)|\geq |V|$.
\end{claim}
\begin{proof} From Claim \ref{cl:12313} we can assume that $V^x_{123}\cup V^x_{13}$
is empty. Then,

 $$|\cL_x|=|V|-1-|V^x_{12}|-|V^x_{23}|.$$

 For each $p\in V_{12}$ and $q\in V_{23}$ we have that
 $d(p_1,q_3)=3$. Thus, $(p_1,q_3)\in R^x$. Therefore,
 $$|R^x|\geq |V^x_{12}||V^x_{23}|=(|V^x_{12}|-1)(|V^x_{23}|-1)+|V^x_{12}|+|V^x_{23}|-1.$$

Let $v$ be an out-neighbor of $p_1$ in $N(x)$. The line $\overrightarrow{vv'}$, where $vv'v''x$ is a path of length three from $v$ to $x$, contains $x$ and we can prove as before that it does not belong to $\cL^x$.
Hence, the set $\mathcal{L}(V)\setminus (\mathcal{L}^x\cup \mathcal{L}_{R^x})$ is not empty and therefore,
 $$|\mathcal{L}(V)|\geq |V|+(|V^x_{12}|-1)(|V^x_{23}|-1)\geq |V|.$$
\end{proof}

\begin{claim}\label{cl:12v3more2} If there is $x\in V$ such that $V^x_{12}$ is non-empty, then $|\cL(V)|\geq |V|$.
\end{claim}
\begin{proof} From Claims  \ref{cl:12313} and \ref{cl:1223} we can assume that $V^x_{123}=V^x_{13}=V^x_{23}=\emptyset$. Since $G$ has no bridges we can also assume that $|N(x)|\geq 2$, $N^3(x)=V^x_3$ and that this latter set has at least two vertices. Moreover,
$$|\cL^x|=|V|-1-|V^x_{12}|.$$

Let $S=\{t_1\in N(x)\mid t\in V^x_{12}, N^3(x)\subseteq N^2(t_1)\}$. For each $t\in V^x_{12}$ such that $t_1\in S$, we pick $z_t\in N^3(x)$ such that $t_1\notin \overrightarrow{xz_t}$. Such vertex $z_t$ does exist as there is $u\in N(x)$, $u\neq t_1$, and then $z_t$ can be chosen as any vertex in $N^3(x)$ in a shortest path from $u$ to $x$. Let $\cL'$ be the following set of lines.
$$\cL':=\{\overrightarrow{t_1z_t}\mid t\in V^x_{12}, t_1\in S\}.$$
We prove that $\cL'\cap (\cL^x\cup \cL_{R^x})$ is empty. We have that for each $\ell=\overrightarrow{t_1z_t}$, $x\in \ell$, which shows that $\ell\notin \cL_{R^x}$.
By Lemma \ref{l:samestart} and the definition of $S$, we have that $\ell\cap (S\cup N^3(x))=\{t_1,z_t\}$. Again the definition of $S$ implies that $ \overrightarrow{xt_1}$ contains $N^3(x)$. Since this latter set is not $\{z_t\}$ we have that $\ell\neq \overrightarrow{xt_1}$ and by the choice of $z_t$ we have that $\ell\neq \overrightarrow{xz_t}$. This proves that $\ell\notin \cL^x$ since 
any line $\overrightarrow{xw}\in \cL^x$ satisfies that if $w\in N^i(x)$, for some $i\in \{1,2,3\}$, then $\overrightarrow{xw}\cap N^i(x)=\{w\}$. However, $t_1,t_2$ and $z_t$ belong to $N(x)$, $N^2(x)$ and $N^3(x)$, respectively.

The definition of $S$ and Lemma \ref{l:samestart} imply that for $t,r\in V^x_{12}$, with $t_1,r_1\in S$, the lines
$\overrightarrow{t_1z_t}$ and $\overrightarrow{r_1z_r}$ are different. Hence,
$|\cL^x\cup \cL'|=|V|-1-|V^x_{12}|+|S|.$

Let $t\in V^x_{12}$ such that $t_1\notin S$. Then there is $z_t\in N^3(x)$ such that $(t_1,z_t)\in R^x$. Therefore, $$|\cL^x\cup \cL'\cup \cL_{R^x}|\geq |V|-1.$$

Let $(u,z)\in R^x$ and let $uvwz$ a path of length three between $u$ and $z$. Let $\ell=\overrightarrow{uv}$ if $v\in N^2(x)$ and $\ell=\overrightarrow{vw}$ if $v\in N(x)$.  One can see that 
$x,u,v,w,z$ belong to $\ell$ and then $\ell\notin \cL_{R^x}$. When $v\in N(x)$, then $u\notin \overrightarrow{xv}\cup \overrightarrow{xw}\cup \overrightarrow{xz}$ and thus $\ell\notin \cL^x$. 
When $v\in N^2(x)$ we have that $z\notin \overrightarrow{xu}\cup \overrightarrow{xv}$ which implies that $\ell\notin \cL^x_1\cup \cL^x_2$, and $u,v\notin \overrightarrow{xz}$ from which we get that $\ell\notin \cL^x_3$. 

Let $t\in V^x_{12}$, with $t_1\in S$. If $\ell=\overrightarrow{t_1z_t}$, then we know that $\ell\cap (S\cup N^3(x))=\{u,z\}=\{t_1,z_t\}$ which is not possible since $d(t_1,z_t)=2$. Therefore, 
$\ell \notin \cL^x\cup \cL'\cup \cL_{R^x}$ and we get that $|\cL(V)|\geq |V|.$

To finish the proof of the claim we consider the case where $R^x$ is empty. This implies that $|S|=|V^x_{12}|$, that is, for each $t\in V^x_{12}$, $t_1\in S$, and $|\cL\cup\cL'|=|V|-1.$

Since $G$ is bridgeless, the vertex $t_1$ has an out-neighbor $u$ in $N(x)$. The line 
$\ell=\overrightarrow{t_1u}$ does not contain $x$ and then, $\ell\notin \cL\cup \cL'$ which ends the proof.
\end{proof}

In order to finish the proof of the theorem it is enough to prove the following claim.

\begin{claim}\label{cl:23v1more2}
If there is $x\in V$ with $V^x_{23}$ non-empty, then
$\cL(V)|\geq |V|$.
\end{claim}
\begin{proof} As before, from Claims \ref{cl:12313} and \ref{cl:1223} we can assume that $V^x_{123}\cup V^x_{13}\cup V^x_{12}$ is empty.
Now, the proof can be done in a similar way to that of the previous case by considering 
$T=\{t_3\in N^3(x)\mid t\in V^x_{23}, \forall u\in N(x), d(u,t_3)=2\}$. As we did in the previous claim, we define the set of lines 
$$\cL'=\{\overrightarrow{u_tt_3} \mid t_3\in T\setminus \overrightarrow{xu_t}, u_t\in N(x)\}.$$
Following the steps of previous claim it can be proved that $|\cL'|=|T|$ and that  
$(\cL \cup \cL_{R^x})\cap \cL'=\emptyset$. Moreover, $|R^x|\geq |\{t\in V^x_{12}\mid t_3\notin T\}|$, since for each such $t$, there is $u\in N(x)$ with $(u,t_3)\in R^x$. This shows that 
$$|\cL \cup \cL'\cup \cL_{R^x}|\geq |V|-1.$$

When $R^x$ is not empty we can prove that there is a line $\ell$ containing $x$ but not in $\cL^x\cup \cL'$ which proves the result. Otherwise, , we get that $|T|=|V^x_{23}|$ and that $|\cL \cup \cL'|=|V|-1.$ In this case, the line $\overrightarrow{zt_3}$, for $z$ an in-neighbor of $t_3$ in $N^3(x)$, does not contain $x$, implying that it does not belong to $\cL\cup \cL'$, finishing the proof.
\end{proof}

\end{proof}

\section{Conclusion}

From the result in \cite{ACHKS} we know that the class of thin graphs of diameter at most three is finite.  Whether this is also true for digraphs is an open question. The main conclusion of this work is that no thin bridgeless digraph can be bipartite or has directed girth four, unless it is $K_{2,2}, K_{2,3}$ or the directed cycle $\overrightarrow{C_3}$.


\begin{thebibliography}{10}

\bibitem{ACHKS} P. Aboulker, X. Chen, G. Huzhang,
  R. Kapadia and C. Supko. Lines, Betweenness and Metric
  Spaces. {\em Discrete Comput. Geom}, {\bf 56}:427--448, 2016.

\bibitem{ABMZ}
P. Aboulker, L. Beaudou, M. Matamala, J. Zamora:
Graphs with no induced house nor induced hole have the de Bruijn-Erdős property. To appear J. Graph Theory 2022.


\bibitem{metricSpace}
{P.~Aboulker, X.~Chen, G.~Huzhang, R.~Kapadia and C.~Supko.}
\emph{Lines, Betweenness and Metric Spaces. Discrete \& Computational Geometry}
\textbf{56}(2)  (2016), 427-448.


\bibitem{AK}
{P. Aboulker and R. Kapadia},
{The Chen-Chv{\'{a}}tal conjecture for metric spaces induced by distance-hereditary graphs},
\emph{Eur. J. Comb.}, \textbf{43} (2015), {1--7}.

\bibitem{AMRZ}
{P. Aboulker, M. Matamala, P. Rochet, J. Zamora},
{A new class of graphs that satisfies the Chen-Chvátal Conjecture.}
\emph{J. of Graph Theory.} \textbf{87}(1) (2018), 77--88.

\bibitem{AM20}
{G. Araujo-Pardo, M. Matamala},
{Chen and Chvátal's Conjecture in tournaments}
\emph{Eur. J. Comb.} 97:103374 (2021).

\bibitem{BG2001}
 J. Bang-Jensen, G. Gutin. Digraphs. Theory, algorithms and applications. Springer-Verlag London, Ltd., London, 2001.

\bibitem{BBCCCCFZ}
{L. Beaudou, A. Bondy, X. Chen, E. Chiniforooshan, M.  Chudnovsky,
V.  Chv{\'{a}}tal, N. Fraiman and Y.  Zwols},
{A De Bruijn-Erd{\H{o}}s Theorem for Chordal Graphs},
\emph{Electr. J. Comb.}, \textbf{22}(1), {P1.70},  {2015}.

\bibitem{BKR}
L. Beaudou, G. Kahn and M. Rosenfeld.
Bisplit graphs satisfy the Chen-Chvátal conjecture. Discret. Math. Theor. Comput. Sci. 21(1) (2019), \# 5.



\bibitem{CC}
X. Chen and V. Chv\'{a}tal,
Problems related to a de Bruijn - Erd\H os theorem,
\emph{Discrete Applied Mathematics} \textbf{156} (2008),  2101--2108.

\bibitem{ChCh11}
E. Chiniforooshan and  V. Chv{\'{a}}tal,
 A de Bruijn - Erd{\H{o}}s theorem and metric spaces,
 \emph{Discrete Mathematics {\&} Theoretical Computer Science},
 \textbf{13} (1) (2011), {67--74}.

\bibitem{Chvatal2}
V. Chv\'atal,
A de Bruijn-Erd\H{o}s theorem for 1-2 metric spaces,
 \emph{Czechoslovak Mathematical Journal} \textbf{64} (1) (2014), 45--51.

\bibitem{chvatal_2018} V. Chvátal. A de Bruijn-Erd\H{o}s Theorem
  in Graphs? In R. Gera, T. Haynes and S. Hedetniemi (eds) {\em
    Graph Theory. Favorite Conjectures and Open Problems.}
  Problem Books in Mathematics, Springer, 2018.
  A de Bruijn-Erd\H{o}s theorem for 1-2 metric spaces,


 \bibitem{dbe}
N.~G.~De~Bruijn, P.~Erd\H{o}s,
On a combinatorial problem,
{\em Indagationes Mathematicae\/} {\bf 10} (1948),
421--423.


\bibitem{D2017}
 R. Diestel, {\em Graph Theory}, Springer Publishing Company (Inc.), 2017.


\bibitem{erdos1943}
P.~Erd\H{o}s, Three point collinearity,
\emph{American Mathematical Monthly} \textbf{50} (1943), p.65 with solutions in Volume \textbf{51} (1944) 169--171.

\bibitem{GMP2024} 
Guillermo Gamboa Quintero, Martín Matamala, Juan Pablo Peña, Quasimetric spaces with few lines, 
https://arxiv.org/abs/2405.19208.

\bibitem{KP2013}
I. Kantor, and B. Patkós, Towards a de Bruijn-Erdős theorem in the $L_1$-metric. \emph{Discrete Comput.
Geom.} {\bf 49}, (2013), 659--670.

\bibitem{MZ2020} M. Matamala and J. Zamora.  Lines   in bipartite graphs and in 2-metric spaces. {\em Journal of
    Graph Theory} 95(4): 565-585 (2020).


\bibitem{SS}
R. Schrader and L. Stenmans.
A de Bruijn-Erdös Theorem for $(q,q-4)$-graphs.
Disc. Applied. Maths. 279 (2020) 198:201.

 \bibitem{Wilson1931}
W. Wilson, On quasi-metric spaces, {\em American Journal of Mathematics\/} {\bf 53}(3) (1931), 675-684.
\end{thebibliography}
\end{document}